\documentclass[11pt]{amsart}
\usepackage{a4wide}
\usepackage{amsmath,amsfonts,amssymb,amsthm,amscd}
\usepackage{mathrsfs}
\usepackage{extpfeil}
\usepackage{stmaryrd}
\usepackage{url}
\usepackage{indentfirst}
\usepackage{enumerate}
\usepackage{graphicx}
\usepackage{appendix}
\usepackage[numbers,sort&compress]{natbib}
\usepackage{color}
\usepackage[colorlinks,linkcolor=blue,citecolor=green]{hyperref}

\newtheorem{theorem}{Theorem}[section]
\newtheorem{lemma}{Lemma}[section]

\newtheorem{proposition}{Proposition}[section]
\numberwithin{equation}{section}

\newcommand{\R}{\mathbb R}
\newcommand{\olOmega}{\overline\Omega}
\newcommand{\MA}{\operatorname{MA}}

\title[Uniqueness for the degenerate Monge-Amp\`ere Equation]{Uniqueness for the degenerate Monge-Amp\`ere Equation\\
on Arbitrary Bounded Convex Domains}

 \author[Y. Zhou]{Yang Zhou}

 \address[Yang Zhou]{The Institute of Mathematical Sciences, The Chinese University of Hong Kong, Shatin, NT, Hong Kong, China}
 \email{yzhou@ims.cuhk.edu.hk}

\date{}

\begin{document}

\begin{abstract}
Let $n\ge2$ and let $\Omega\subset\R^n$ be an arbitrary bounded open convex set.
The author prove that, for $p>n$, the Dirichlet problem
\[
  \det D^2u=(-u)^p\quad\text{in }\Omega,
  \qquad u=0\quad\text{on }\partial\Omega,
  \qquad u>0\quad\text{in }\Omega
\]
has at most one convex Alexandrov solution.
The proof is based on the affine behavior of the Monge--Amp\`ere energy and on a power-concavity property of the $L^{p+1}$ mass along the Legendre path connecting two solutions.
At the homogeneous exponent $p=n$, the same argument shows that any two nonzero solutions with the same coefficient are positive multiples of one another.
\end{abstract}
\maketitle

\noindent\textbf{Keywords.}
Degenerate Monge-Amp\`ere equation; Legendre transform; Brunn-Minkowski inequality; Uniqueness.

\noindent\textbf{2020 Mathematics Subject Classification.}
35J96, 35A02, 49J40, 52A40.

\section{Introduction}

Let $n\ge2$, let $\Omega\subset\R^n$ be a bounded convex domain, 
and let $u\in C(\olOmega)$ be a non-trivial convex Alexandrov solution of the zero-Dirichlet problem
\begin{equation}\label{eq:mp}
	\begin{split}
		\det D^2u &=(-u)^p\quad\text{in }\Omega, \\
		u &=0\quad\qquad \text{on } \partial\Omega.
	\end{split}
\end{equation}
The exponent $p=n$ is homogeneous:
if $u$ is a solution, so is every positive multiple of $u$.  
For $p\ne n$ the coefficient in front of $(-u)^p$ can instead be removed by a scaling of $u$.  
This elementary dichotomy separates the eigenvalue problem $p=n$ from the subcritical and supercritical regimes.

The variational theory of \eqref{eq:mp} begins with Lions' work \cite{Lions1985} on the Monge--Amp\`ere eigenvalue on smooth uniformly convex domains.  
Tso \cite{Tso1990} introduced the real Monge--Amp\`ere functional, proved existence for $0<p\ne n$, and established uniqueness in the subcritical range $0<p<n$.  Hartenstine \cite{Ha2009} obtained the subcritical existence result on bounded strictly convex domains.  Later, Le \cite{Le2018} extended the existence theory, the critical variational characterization, and uniqueness of the critical eigenfunction to arbitrary bounded convex domains.  
These developments are also discussed from the viewpoint of generalized Monge--Amp\`ere functionals by Tong and Yau \cite{TongYau2023}.

Existence in the supercritical range $p>n$ also follows from \cite{Le2018}, whereas uniqueness in this regime is substantially more difficult.  
Huang \cite{Huang2019} proved uniqueness of least-energy solutions in dimension two for smooth uniformly convex domains and obtained full uniqueness for exponents in a range just above $n$.  
Cheng, Huang, and Xu \cite{CHX2024} proved further two-dimensional uniqueness results under symmetry assumptions, without requiring uniform convexity of the domain.  
As summarized by Le \cite[Section~1.1]{Le2026}, unrestricted uniqueness for $p>n$ on a general convex domain was still open.  
In this paper, we give an affirmative answer for this open problem.

\begin{theorem}\label{thm:main}
Let $p>n$.  Suppose $u_0,u_1\in C(\olOmega)$ are non-trivial convex Alexandrov solutions to \eqref{eq:mp}.
Then $u_0=u_1$ on $\olOmega$.
\end{theorem}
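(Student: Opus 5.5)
We plan to prove Theorem~\ref{thm:main} by interpolating between the two solutions along the Legendre path. The guiding idea comes from the abstract: the Monge--Amp\`ere energy is affine along this path, while the $L^{p+1}$ mass is power-concave along it. Combining the two with the stationarity of each $u_i$ should force $u_0=u_1$.

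\textbf{Step 1: the path and the energy.} Let $u_0,u_1<0$ be the two solutions, with Legendre transforms $u_i^*(y)=\sup_{x\in\Omega}(x\cdot y-u_i(x))$, defined on all of $\R^n$. Set $w_t^*=(1-t)u_0^*+tu_1^*$ and let $u_t=(w_t^*)^*$, restricted to $\olOmega$. Since $\nabla u_i(\Omega)=\R^n$ in the Alexandrov sense (because $u_i=0$ on $\partial\Omega$), we have $u_t\in C(\olOmega)$, $u_t$ is convex, and $u_t=0$ on $\partial\Omega$. We use the energy
\[
E(u)=\frac{1}{n+1}\int_\Omega(-u)\,\det D^2u .
\]
By the change of variables $y=\nabla u(x)$, $-u(x)=u^*(y)-x\cdot y$, this becomes a multilinear or affine expression in $u^*$, of the form $E(u)=c\int_{\R^n}(\ldots)$. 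The cleanest formulation is the one-variable derivative identity
\[
\frac{d}{dt}\int_\Omega(-u_t)\det D^2u_t=(n+1)\int_{\R^n}\dot w_t^*\,dy ,
\]
or equivalently the statement that $-\int_{\R^n}u^*$ (suitably renormalised) plays the role of the energy. I would first establish, by approximation with smooth strictly convex domains and solutions, that $t\mapsto \int_\Omega \det D^2 u_t\,(\ldots)$ is affine. This is the "affine behavior of the Monge--Amp\`ere energy."

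\textbf{Step 2: power-concavity of the mass.} Define $M(t)=\int_\Omega(-u_t)^{p+1}dx$. We write the sublevel sets of $-u_t$ via the Legendre structure: $\{u_t<-s\}$ contains the Minkowski combination $(1-t)\{u_0<-s_0\}+t\{u_1<-s_1\}$ with $s=(1-t)s_0+ts_1$, by the infimal-convolution formula
\[
u_t(x)=\inf\{(1-t)u_0(x_0)+tu_1(x_1):x=(1-t)x_0+tx_1\}.
\]
Hence $-u_t\ge$ the sup-convolution of $-u_0$ and $-u_1$. The Borell--Brascamp--Lieb inequality then gives that $M(t)^{1/(p+1+n)\cdot\ldots}$ is concave. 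More precisely, $\int(-u_t)^{p+1}$ is $\gamma$-concave with $\gamma=1/(n+p+1)$-type exponent; since $(-u)^{p+1}$ is $1/(p+1)$-concave, BBL yields $1/(n+p+1)$... I would pin down the exponent $\gamma>0$ carefully.

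\textbf{Step 3: the comparison.} Consider $J(t)=\mathcal E(u_t)-\frac{1}{p+1}M(t)$ (or its scale-invariant form $\mathcal E/M^{(n+1)/(p+1)}$). Each $u_i$ solves the Euler--Lagrange equation, so the one-sided derivatives at $t=0$ and $t=1$ vanish. I would compute them through the Legendre side, where $\dot w^*=u_1^*-u_0^*$ and the first variation of the mass equals $\int\dot w^*\det D^2u^*\ldots$. Affineness of the energy together with strict power-concavity of $M$ then contradicts the vanishing derivatives at both endpoints unless $M$ is affine. Here $p>n$ matters: it makes the exponent relation go the right way, so that concavity of $M^\gamma$ combined with linearity of $\mathcal E$ forces the endpoint derivatives to have incompatible signs. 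At $p=n$ the two are homogeneous of the same degree, which is why one only obtains proportionality in that case.

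\textbf{Step 4: the equality case.} Equality in BBL forces the level sets to be homothetic translates. Since all the level sets sit inside $\Omega$ with boundary level $\partial\Omega$, the translations vanish, so $u_1=\lambda u_0$. Scaling then gives $\lambda^n=\lambda^p$, hence $\lambda=1$.

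\textbf{Main obstacle.} I expect the main difficulty to be the rigorous derivative and equality analysis for merely Alexandrov solutions on non-smooth $\Omega$. This covers justifying the endpoint derivatives, where the path $u_t$ is only Lipschitz in $t$ on the Legendre side, and extracting rigidity from the BBL equality case. My first attempt would be to work entirely with $u^*$ on $\R^n$, where the path is linear, and use the weak continuity of Monge--Amp\`ere measures.
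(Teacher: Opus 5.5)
Your outline uses the same ingredients as the paper: Legendre path, affine energy on the Legendre side, Brunn--Minkowski power-concavity of the $L^{p+1}$ mass, equality-case rigidity, and $C^n=C^p$ at the end. But Step~3, where the proof actually happens, does not close as you set it up. Along the Legendre path joining the two \emph{un-normalized} solutions, $E(u_t)$ is affine with $E(u_i)=\frac{1}{n+1}M(i)$. However, $M$ is only $\gamma$-concave: $M^\gamma$ is concave with $\gamma=1/(n+p+1)$ from BBL (as you guessed), or $1/(n+m)$ from lifted bodies. So $J=E-\frac{1}{p+1}M$ is not convex. The endpoint information actually available is $J'_+(0)\ge0\ge J'_-(1)$, which amounts to $M'_+(0)\le\frac{p+1}{n+1}\bigl(M(1)-M(0)\bigr)\le M'_-(1)$. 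This condition, and even its version with exact equalities, is compatible with $\gamma$-concavity whenever $M(1)/M(0)$ is large, for every $p$. For instance, $M=N^{1/\gamma}$ with $N$ affine satisfies the inequalities. So there is no contradiction, and ``$p>n$ makes the exponent relation go the right way'' is not the mechanism: in the paper's argument $p>n$ is used only at the very end. Likewise, the quotient $E/M^{(n+1)/(p+1)}$ along the un-normalized path inherits no quasi-convexity from these two facts.

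The missing idea is to use the scaling \emph{before} building the path, so that the energy is constant along it. Write $\mathcal E(v)=\int_\Omega(-v)\,d\MA[v]=(n+1)E(v)$ and set $\widehat u_i=\mathcal E(u_i)^{-1/(n+1)}u_i$. These satisfy $\mathcal E(\widehat u_i)=1$ and $\MA[\widehat u_i]=\lambda_i(-\widehat u_i)^p\,dx$ with possibly different $\lambda_i>0$; join $\widehat u_0$ to $\widehat u_1$. Then $\mathcal E(\widehat u_t)\equiv1$. Integrating the pointwise Fenchel inequality $v_1^*-v_0^*\ge v_0(\nabla v_0^*)-v_1(\nabla v_0^*)$ gives the supporting inequality $\mathcal E(v_1)-\mathcal E(v_0)\ge(n+1)\int_\Omega(v_0-v_1)\,d\MA[v_0]$. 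With $v_0=\widehat u_0$ and $v_1=\widehat u_t$ this yields $\int_\Omega(\widehat u_0-\widehat u_t)(-\widehat u_0)^p\,dx\le0$. The Lipschitz bound $\|\widehat u_t-\widehat u_0\|_\infty\le Ct$ and Taylor's formula then give $P'_+(0)\le0$, and symmetrically $P'_-(1)\ge0$. For Alexandrov solutions you only get, and only need, these one-sided inequalities, not vanishing derivatives. Concavity then forces $P$ to be constant. Brunn--Minkowski equality for the lifted bodies $\{(x,z)\in\olOmega\times\R^m:|z|\le|v(x)|^{(p+1)/m}\}$, which all project onto $\olOmega$, makes the homothety trivial. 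Hence $\widehat u_0=\widehat u_1$, i.e.\ $u_1=Cu_0$, and $C^n=C^p$ gives $C=1$.

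A smaller error: $\nabla u_i(\Omega)=\R^n$ is false, since $|\partial u_i(\Omega)|=\int_\Omega(-u_i)^p\,dx<\infty$. The boundary values of $u_t$ must come from the infimal-convolution formula, using exposed faces of $\olOmega$. This is also why the Legendre-side energy needs the renormalization $\mathcal E(v)=(n+1)\int_{\R^n}(v^*-h_{\olOmega})\,dy$.
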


At $p=n$, the natural problem contains an eigenvalue parameter,
\[
  \det D^2u=\lambda[\Omega](-u)^n,
  \qquad u|_{\partial\Omega}=0.
\]
Lions \cite{Lions1985}, for smooth uniformly convex domains, and Le
\cite{Le2018}, for general bounded convex domains, proved that the eigenvalue is unique and that the corresponding nonzero eigenfunction is unique up to a positive multiplicative constant.  
Since scaling does not change the coefficient when $p=n$, the coefficient-one equation \eqref{eq:mp} may not have a nonzero solution on an arbitrary domain.  
Our argument also gives the following uniqueness statement (and, more generally, the same conclusion for any fixed positive coefficient).

\begin{theorem}\label{thm:eigen}
Let $p=n$, and let $u_0,u_1$ satisfy all the other assumptions of
Theorem~\ref{thm:main}.  
Then $u_1=c u_0$ for some $c>0$.
\end{theorem}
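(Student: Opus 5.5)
The plan is to run the same Legendre-path argument that proves Theorem~\ref{thm:main}, now in the homogeneous case $p=n$. There the inequalities become equalities, and the equality case should force $u_0$ and $u_1$ to be proportional.

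\emph{Normalization and path.} Write $u_i<0$ in $\Omega$, and let $u_i^*$ be the Legendre transforms, which are finite convex functions on $\R^n$. Since $p=n$, every positive multiple of a solution is again a solution, so we may rescale $u_1$ freely. We normalize so that
\[
\int_\Omega(-u_0)^{n+1}\,dx=\int_\Omega(-u_1)^{n+1}\,dx .
\]
Next we form the Legendre path
\[
u_t:=\big((1-t)u_0^*+tu_1^*\big)^*,\qquad t\in[0,1].
\]
Each $u_t$ is convex on $\Omega$ with zero boundary values. It is the infimal convolution of $u_0$ and $u_1$, so each sublevel set $\{u_t<-s\}$ contains the Minkowski combination $(1-t)\{u_0<-s\}+t\{u_1<-s\}$.

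\emph{Energy and mass along the path.} The Monge--Amp\`ere energy $E(u)=\int_\Omega(-u)\det D^2u$ can be written through the dual measure,
\[
E(u)=\int (\text{linear expression in }u^*)\,d\mu,\qquad \mu=(\nabla u)_\#\det D^2u,
\]
and this representation makes $E$ affine along $t$ in the sense used for Theorem~\ref{thm:main}. For the mass we use the layer-cake formula
\[
\int_\Omega(-u_t)^{n+1}\,dx=(n+1)\int_0^\infty s^n\,|\{u_t<-s\}|\,ds .
\]
By Brunn--Minkowski, $s\mapsto|\{u_t<-s\}|^{1/n}$ dominates the convex combination of the corresponding quantities for $u_0$ and $u_1$. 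Borell--Brascamp--Lieb then gives a power-concavity of $t\mapsto\int(-u_t)^{n+1}$, with exponent $1/(n+\cdots)$ as in the $p>n$ argument.

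\emph{Equality forces proportionality.} The functional $J(u)=\frac{1}{n+1}E(u)-\frac{1}{n+1}\int(-u)^{n+1}$ has $u_0$ and $u_1$ as critical points. We compare $J(u_t)$, which is affine in the energy part and concave in the mass part, with its endpoint values, using the first variation at $t=0$ and $t=1$. For $p>n$ this comparison gave strict inequality. In the homogeneous case $p=n$ the scaling degeneracy turns it into equality, so all the inequalities above are equalities. Equality in Brunn--Minkowski for the convex sublevel sets means that, for almost every $s$, the sets $\{u_0<-s\}$ and $\{u_1<-s\}$ are homothetic. Equality in the Borell--Brascamp--Lieb step fixes the homothety ratio and the centers compatibly, independently of $s$ after reparametrization. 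Because both sets exhaust the same $\Omega$ as $s\to0$, the translation parts must vanish and the ratio must be $1$ in $x$. Hence $u_1(x)=\phi(u_0(x))$ for a monotone $\phi$. Plugging this into the equation gives
\[
\phi'(u_0)^n\,\det D^2u_0\cdot(\text{rank-one correction})=(-\phi(u_0))^n .
\]
Equality in the energy identity forces the rank-one term to vanish, so $\phi''=0$. Together with $\phi(0)=0$ this gives $\phi(s)=cs$, that is, $u_1=cu_0$.

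\emph{Main obstacle.} The hardest step is the rigorous equality characterization for merely Alexandrov solutions on a non-smooth $\Omega$. The first variations at the endpoints need justification because $u_t$ is only convex, and this can be handled by differentiating the dual energy in $u^*$, where the path is linear. The Brunn--Minkowski equality case has to be extracted for almost every level, and homothety of the level sets has to be upgraded to an affine relation between $u_0$ and $u_1$. To achieve that upgrade, I would first use the equality in the dual energy to show that $u_t^*$ is a solution for each $t$. I would then argue that the second derivative in $t$ of the concave mass must vanish, which forces $u_1^*(y)=c\,u_0^*(y/c)$, the Legendre form of $u_1=cu_0$.
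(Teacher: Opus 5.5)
Your ingredients are the right ones (Legendre path, affine energy, power-concave $L^{n+1}$ mass, endpoint first variations, an equality case). However, the two steps that carry the proof are not established.

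\textbf{Why equality holds.} You never derive it. ``The scaling degeneracy turns it into equality'' is not an argument, and it misidentifies the mechanism. Homogeneity plays no role in forcing equality: the same mechanism works for every $p$, and $p$ enters only when converting $u_1=Cu_0$ into $C=1$ for $p>n$.

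Also, $S(t)=\int_\Omega(-u_t)^{n+1}\,dx$ need not be concave along the path; only $P=S^{1/(n+m)}$, with $m\ge n+1$ an integer, is. So $J(u_t)$ cannot be treated as convex in $t$.

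The missing tool is the supporting inequality
$\mathcal E(v_1)-\mathcal E(v_0)\ge(n+1)\int_\Omega(v_0-v_1)\,d\MA[v_0]$.
It is obtained by integrating $v_1^*(y)-v_0^*(y)\ge v_0(\nabla v_0^*(y))-v_1(\nabla v_0^*(y))$ via the push-forward identity $\int_{\R^n}F(\nabla v^*)\,dy=\int_\Omega F\,d\MA[v]$. The argument then runs as follows:
\begin{enumerate}[(1)]
\item With your normalization $\mathcal E(u_0)=\mathcal E(u_1)$, the energy is constant along the path.
\item At base point $u_0$, the supporting inequality then gives $\int_\Omega(u_0-u_t)(-u_0)^n\,dx\le0$.
\item A Taylor expansion of $S$, using $\|u_t-u_0\|_{L^\infty}\le t\|u_1-u_0\|_{L^\infty}$, yields $P'_+(0)\le0$.
\item Symmetrically, $P'_-(1)\ge0$.
\item Concavity then forces $P$ to be constant.
\end{enumerate}

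Relatedly, affinity of the energy requires a representation that is linear in $u^*$ over a fixed domain, namely $\mathcal E(v)=(n+1)\int_{\R^n}(v^*-h_K)\,dy$ with $h_K$ the support function of $\olOmega$. Integrating against $\mu=(\nabla u)_\#\det D^2u$ does not give affinity, because that measure depends on $u$.

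\textbf{Rigidity.} Your rigidity step would not go through as written.
\begin{itemize}
\item The equality statements you invoke (homothety of same-level sublevel sets for a.e.\ $s$, then a Borell--Brascamp--Lieb equality ``fixing ratios and centres'') are asserted rather than derived from a specific chain of inequalities.
\item The chain is internally inconsistent. If translations vanish and ratios are $1$, you already have $u_0=u_1$, yet you only conclude $u_1=\phi(u_0)$.
\item The last step rests on second derivatives of an a priori merely monotone $\phi$. It also rests on the unexplained claim that equality in the energy identity kills the rank-one term.
\end{itemize}

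The paper avoids all of this by lifting to a convex body. With $q=n+1$ and $a=q/m$, put $\mathcal K[v]=\{(x,z)\in\olOmega\times\R^m:|z|\le|v(x)|^a\}$, whose volume is $\omega_m\int_\Omega|v|^q\,dx$. The infimal-convolution formula for $u_t$ gives $\mathcal K[u_t]\supset(1-t)\mathcal K[u_0]+t\mathcal K[u_1]$. So constancy of $P$ forces equality in the ordinary Brunn--Minkowski inequality in $\R^{n+m}$, hence $\mathcal K[u_1]=c\,\mathcal K[u_0]+b$. Then:
\begin{itemize}
\item equal volumes give $c=1$;
\item equal projections $\olOmega$ give $b_x=0$;
\item fibres that are centred balls give $b_z=0$.
\end{itemize}
So your rescaled $u_1$ equals $u_0$, i.e.\ the original $u_1$ is a positive multiple of $u_0$, with no PDE computation needed.
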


The Alexandrov Monge--Amp\`ere measure of $u$ is the Borel measure determined by
\[
  \MA[u](E):=\bigl|\partial u(E)\bigr|
\]
for Borel sets $E\subset\Omega$.  
Here $\partial u(E)$ is the union of the subdifferentials $\partial u(x)$ over $x\in E$.  
We say that a convex function $u$ is an Alexandrov solution of $\det D^2u=f$ if $\MA[u]=f\,dx$; see Figalli's book \cite{Fi2017} for details.

The key idea is to analyze the Monge--Amp\`ere energy and the
$L^{p+1}$ mass along a Legendre path.  
For a convex function $u$ with zero boundary values, define
\[
  \mathcal E(u):=\int_\Omega -u\,d\MA[u].
\]
Motivated by the duality method in Rockafellar \cite[Theorem~16.4]{Rockafellar1970}, and by Salani's use of
infimal convolution and the affine energy identity
\cite[Proposition~14]{Salani2005} (see also \cite{Semmes1988,CorderoKlartag2012}), we proceed as follows.  
Extend a convex function $v$ on $\olOmega$ by $+\infty$ outside $\olOmega$ and define
\begin{equation}\label{eq:le-v}
  v^*(y):=\sup\{x\cdot y-v(x):x\in\olOmega\}.
\end{equation}
For $u_0,u_1\in C(\olOmega)$, their Legendre path is
\[
  u_t:=\bigl((1-t)u_0^*+tu_1^*\bigr)^*,\qquad 0\le t\le1.
\]
Along this path, we prove
\[
  \mathcal E(u_t)=(1-t)\mathcal E(u_0)+t\mathcal E(u_1).
\]
Moreover, for every integer $m\ge p+1$, the function
\[
  P(t):=\left(\int_\Omega |u_t(x)|^{p+1}\,dx\right)^{1/(n+m)}
\]
is concave on $[0,1]$.  
Note that $P$ is a fixed positive power of the
$L^{p+1}$ mass.  
The supporting inequality for the Monge--Amp\`ere energy forces both endpoint slopes of this concave function to vanish.  
Hence $P$ is constant, and its equality case shows that
the normalized solutions are equal.  
This yields uniqueness.  
The concavity and rigidity of $P$ follow from the classical Brunn--Minkowski inequality and its equality case.

Our proof is based on convex analysis and does not require strict convexity or smoothness of $\Omega$.  
When $\Omega$ is smooth and uniformly convex, global boundary regularity is available.  
At the critical exponent $p=n$, Le--Savin \cite{LeSavin2017} prove global smoothness of the Monge--Amp\`ere eigenfunctions.
For $p>0$ with $p\ne n$, Le \cite[Theorem~1.6]{Le2018} gives global $C^{2,\beta}$ regularity.    
In that smoother setting, a different proof of Theorem \ref{thm:main} based on PDE techniques and degree theory is being developed in the forthcoming manuscript \cite{Z2026}.

The paper is organized as follows.  Section~\ref{S2} establishes the dual
representation and affine behavior of the Monge--Amp\`ere energy along the
Legendre path.  Section~\ref{S3} proves the required $L^{p+1}$ concavity and
its rigidity.  The main theorems are proved in Section~\ref{S4}.

\vspace{5mm}

\noindent{\bf Acknowledgement.}
The author would like to thank Professor G. Huang for introducing this research topic and for providing timely and valuable feedback.

\vspace{3mm}
\noindent\textbf{Disclosure on AI assistance.}
The author used AI-assisted tools, principally ChatGPT.  The author verified
all theorem statements and proofs and takes full responsibility for the
contents of the paper.

\section{The Monge-Amp\`ere energy}\label{S2}

Throughout the paper, $\Omega\subset\R^n$ is a bounded convex domain.  
If $v\in C(\olOmega)$ is convex and vanishes on $\partial\Omega$, then its Legendre transform $v^*$, defined in \eqref{eq:le-v}, is finite and convex on $\R^n$, and hence differentiable almost everywhere.

We first record a Legendre push-forward identity; compare McCann \cite[Lemma~4.1]{McCann1997}.
	
\begin{lemma}\label{lem:push}
For every nonnegative Borel function $F$ on $\olOmega$ that vanishes on
$\partial\Omega$, it holds
\begin{equation}\label{eq:push}
  \int_{\R^n}F\bigl(\nabla v^*(y)\bigr)\,dy
  =\int_\Omega F(x)\,d\MA[v](x).
\end{equation}
\end{lemma}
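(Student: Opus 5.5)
The plan is to prove \eqref{eq:push} for indicator functions of Borel sets $E\subset\Omega$ and then pass to general $F$ by linearity and monotone convergence. The condition that $F$ vanishes on $\partial\Omega$ is essential: every $y\in\R^n$ has its supremum in \eqref{eq:le-v} attained somewhere on the compact set $\olOmega$, so $\nabla v^*$ sends all of $\R^n$ (up to a null set) into $\olOmega$. The part of $\R^n$ that is sent to $\partial\Omega$ typically has infinite measure. For instance, for $|y|$ large the maximizer lies on the boundary.

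\textbf{Step 1 (duality of subdifferentials).} Let $\bar v$ denote $v$ extended by $+\infty$ outside $\olOmega$. Then $\bar v$ is a closed proper convex function, since $v$ is continuous on the compact convex set $\olOmega$. By Fenchel--Young, $y\in\partial \bar v(x)$ if and only if $x\in\partial v^*(y)$; this is the standard inversion rule (Rockafellar, Theorem 23.5). For $x\in\Omega$ an interior point, $\partial\bar v(x)=\partial v(x)$ is the usual subdifferential. Let $D$ be the set of points where $v^*$ is differentiable; its complement is Lebesgue-null. For $y\in D$ we have $\partial v^*(y)=\{\nabla v^*(y)\}$. Hence for a Borel set $E\subset\Omega$,
\[
  \{y\in D:\nabla v^*(y)\in E\}=\partial v(E)\cap D .
\]
Here $\partial v(E)$ is Lebesgue measurable; this is the standard fact underlying the definition of $\MA$. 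It follows that
\[
  \bigl|\{y:\nabla v^*(y)\in E\}\bigr|=|\partial v(E)|=\MA[v](E).
\]
This is exactly \eqref{eq:push} for $F=\mathbf 1_E$.

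\textbf{Step 2 (extension to general $F$).} Take $F\ge0$ Borel on $\olOmega$ with $F=0$ on $\partial\Omega$. Write $F=F\mathbf 1_\Omega$, so the left side only sees the set $\{\nabla v^*\in\Omega\}$. We also need $y\mapsto F(\nabla v^*(y))$ to be measurable. Since $\nabla v^*$ is a Borel map on the Borel set $D$ (as a limit of difference quotients), the composition is Borel there. Both sides of \eqref{eq:push} are then countably additive in simple functions, by Step 1. Approximating $F$ from below by increasing simple functions supported in $\Omega$ and applying monotone convergence on both sides gives \eqref{eq:push}. The values of $\nabla v^*$ on $\partial\Omega$ contribute nothing because $F$ vanishes there.

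\textbf{Main obstacle.} The delicate point is the inversion identity $x\in\partial v^*(y)\iff y\in\partial v(x)$ for interior $x$. This requires that $\bar v$ coincide with its biconjugate $\bar v^{**}$ on $\olOmega$, which holds because $\bar v$ is lower semicontinuous and convex. Measurability of $\partial v(E)$ should be cited from Figalli's book. One also needs that points $y$ with $\partial v^*(y)$ non-singleton form a null set, which is Rademacher/Alexandrov for convex $v^*$. Beyond these inputs the argument is routine bookkeeping.
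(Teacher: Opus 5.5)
Your proposal is correct and is essentially the paper's own argument. You identify $\{y:\nabla v^*(y)\in E\}$ with $\partial v(E)$ up to a null set by subdifferential inversion for the $+\infty$-extension of $v$; the paper phrases the same step as uniqueness of the maximizer in \eqref{eq:le-v} at differentiability points plus Fenchel equality. You then pass to general $F$ through simple functions supported in $\Omega$ and monotone convergence, as the paper does.

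The only slip is in your motivating aside, which the proof never uses: the maximizer need not lie on $\partial\Omega$ for large $|y|$. For $v=-\sqrt{1-|x|^2}$ on the unit ball, $\partial v(\Omega)=\R^n$, so every maximizer is interior.
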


\begin{proof}
Extend $v$ by $+\infty$ outside $\olOmega$.  
At every differentiability point $y$ of $v^*$, the maximizer in \eqref{eq:le-v} is unique and equals $x=\nabla v^*(y)$.  
Fenchel equality gives
\[
  y\in\partial v(x),\qquad v^*(y)+v(x)=x\cdot y,
\]
and therefore
\[
  v^*(y)-y\cdot\nabla v^*(y)
  =-v\bigl(\nabla v^*(y)\bigr)\ge0.
\]
Conversely, if $E\subset\Omega$ is Borel and $v^*$ is differentiable at $y$, then
\[
  \nabla v^*(y)\in E
  \quad\Longleftrightarrow\quad
  y\in\partial v(E).
\]
Indeed, for an interior point $x\in\Omega$, the subdifferential of the extended function agrees with the usual subdifferential of $v$ in $\Omega$.
Consequently,
\begin{equation}\label{eq:area}
  \bigl|\{y:\nabla v^*(y)\in E\}\bigr|
  =|\partial v(E)|=\MA[v](E).
\end{equation}
The identity \eqref{eq:push} follows first for nonnegative simple functions supported in $\Omega$ and then, by monotone convergence, for every admissible $F$ vanishing on $\partial\Omega$ in this lemma. 
\end{proof}

\subsection{The dual-representation Monge--Amp\`ere energy}

The aim of this subsection is to derive a boundary-free representation of the Monge--Amp\`ere energy on the Legendre side.  
Set $K:=\olOmega$ and let $h_K(y):=\sup_{x\in K}x\cdot y$ be the support function of $K$.

\begin{lemma}\label{lem:energy}
For a convex function $v\in C(\olOmega)$ vanishing on $\partial\Omega$, the following quantities are equal in $[0,+\infty]$:
\begin{equation}\label{eq:energy}
  \mathcal E(v)
  :=\int_\Omega -v\,d\MA[v]
  =\int_{\R^n}\bigl(v^*-y\cdot\nabla v^*\bigr)\,dy
  =(n+1)\int_{\R^n}\bigl(v^*-h_K\bigr)\,dy.
\end{equation}
In particular, one of the three expressions is finite if and only if all three are finite.
\end{lemma}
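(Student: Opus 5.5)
The plan is to prove the first equality by applying Lemma~\ref{lem:push} with $F=-v$, and the second by an Euler-type homogeneity identity obtained from integration by parts along rays.

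\emph{First equality.} The function $F:=-v$ is continuous, nonnegative on $\olOmega$ (a convex function vanishing on $\partial\Omega$ is $\le 0$ inside), and vanishes on $\partial\Omega$. Lemma~\ref{lem:push} gives
\[
\int_{\R^n}-v(\nabla v^*(y))\,dy=\int_\Omega -v\,d\MA[v],
\]
and by Fenchel equality the left integrand equals $v^*(y)-y\cdot\nabla v^*(y)$ almost everywhere. This holds in $[0,+\infty]$ because both integrands are nonnegative.

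\emph{Second equality.} Set $w:=v^*-h_K$. Since $v\le 0$ on $K$ and $v=0$ on $\partial\Omega$, we have $h_K\le v^*\le h_K+\max_K(-v)$, because the supremum defining $h_K$ is attained on $\partial K$. So $0\le w\le M:=\|v\|_\infty$. Since $h_K$ is $1$-homogeneous, $y\cdot\nabla h_K=h_K$ a.e., and hence
\[
v^*-y\cdot\nabla v^*=w-y\cdot\nabla w \quad\text{a.e.}
\]
We must therefore show
\[
\int(w-y\cdot\nabla w)\,dy=(n+1)\int w\,dy,
\qquad\text{i.e.}\qquad
-\int y\cdot\nabla w\,dy=n\int w\,dy .
\]
Formally this is the divergence theorem, since $\operatorname{div}(yw)=nw+y\cdot\nabla w$. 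To make it rigorous without decay assumptions, use polar coordinates. For each direction $\theta\in S^{n-1}$ let $\phi(r):=w(r\theta)$. This function is locally Lipschitz and nonnegative. It is also nonincreasing in $r$: indeed $r\mapsto v^*(r\theta)-r h_K(\theta)$ is convex, bounded between $0$ and $M$ on $[0,\infty)$, and a bounded convex function on a half-line is nonincreasing. Then
\[
-\int_0^\infty r\phi'(r)\,r^{n-1}\,dr=\int_0^\infty r^n(-\phi')\,dr .
\]
Write $\phi(r)=\phi(\infty)+\int_r^\infty(-\phi')$, and note $\phi(\infty)$ exists by monotonicity. Tonelli gives
\[
n\int_0^\infty\bigl(\phi-\phi(\infty)\bigr)r^{n-1}\,dr=\int_0^\infty(-\phi'(s))\,s^n\,ds,
\]
with both sides in $[0,\infty]$. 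Integrating over $\theta$ yields the identity, provided $\phi(\infty)=0$ whenever the integrals are finite. If $\phi(\infty)>0$ on a set of directions of positive measure, then $\int w=\infty$. In that case the first expression should also be infinite; this is not automatic, so it has to be checked.

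\emph{Expected obstacle: the limit $\phi(\infty)$.} I would show $\phi(\infty)=0$ for every $\theta$ directly. For large $r$, the maximizers $x_r\in K$ of $x\cdot r\theta-v(x)$ satisfy $x_r\cdot\theta\ge h_K(\theta)-M/r$, so every limit point of $x_r$ lies in the face $F_\theta$ of $K$ exposed by $\theta$, which is contained in $\partial\Omega$. Then
\[
w(r\theta)=-v(x_r)-r\bigl(h_K(\theta)-x_r\cdot\theta\bigr)\le -v(x_r)\to0
\]
by continuity of $v$ and $v=0$ on $\partial\Omega$. Hence $\phi(\infty)=0$ in all directions, and the Tonelli identity holds in $[0,\infty]$ with no finiteness assumption, giving all three equalities simultaneously. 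Finally, since the integrands are nonnegative and the three expressions are equal as elements of $[0,+\infty]$, any one of them is finite exactly when all three are.
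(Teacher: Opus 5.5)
Your proof is correct. The first equality is handled as in the paper (Lemma~\ref{lem:push} with $F=-v$ plus Fenchel equality). For the second, both you and the paper work along rays with the convex, nonincreasing, nonnegative profile $\phi(r)=(v^*-h_K)(r\theta)$. The difference is how the behavior at infinity is controlled.

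The paper integrates by parts only on $B_R$, which produces the boundary term $R^n\int_{\mathbb S^{n-1}}d_\theta(R)\,d\theta$. It kills this term using monotonicity of $d_\theta$ together with the vanishing $L^1$ tail on $B_R\setminus B_{R/2}$, and it treats finiteness of each side as a separate case.

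You instead write $\phi(r)=\phi(\infty)+\int_r^\infty(-\phi')$ and apply Tonelli on the whole half-line. This needs the extra geometric input $\phi(\infty)=0$. Your argument for it is correct: the maximizers $x_r$ satisfy $h_K(\theta)-x_r\cdot\theta\le M/r$, so they accumulate on the exposed face of $K$ in direction $\theta$, which lies in $\partial\Omega$, and $v$ is continuous and zero there.

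Your route gives a single identity in $[0,+\infty]$ with no case distinction, plus pointwise decay of $v^*-h_K$ along every ray. The paper's route needs no decay information, only integrability.

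Your remark that the infinite case ``is not automatic'' is too pessimistic. Since $\phi'\le0$, one has $\phi-r\phi'\ge\phi$. So in any direction where $\int_0^\infty\phi\,r^{n-1}\,dr=\infty$ (in particular whenever $\phi(\infty)>0$), the left-hand radial integral is infinite too, and the radial identity holds in $[0,+\infty]$ regardless. This is exactly the inequality the paper uses. It makes your decay step dispensable, though not wrong.
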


\begin{proof}
Convexity and the zero boundary values imply $v\le0$ on $K$, so $F=-v$ is
admissible in Lemma~\ref{lem:push}.  
The first equality then follows from that lemma and Fenchel equality.

Set
\[
  \delta_v(y):=v^*(y)-h_K(y).
\]
For $y\ne0$, choose $x_y\in K$ with $x_y\cdot y=h_K(y)$.  
Such a point lies on $\partial K$, and hence $v(x_y)=0$; 
therefore $v^*(y)\ge x_y\cdot y-v(x_y)=h_K(y)$.  
The same inequality at $y=0$ follows from $v\le0$ and $v^*\ge0$.  Thus $\delta_v\ge0$.

Writing $y=r\theta$, where $r=|y|$ and $\theta\in\mathbb S^{n-1}$, define $d_\theta(r):=\delta_v(r\theta)$.  
From the definitions,
\[
  d_\theta(r)=\sup_{x\in K}
  \bigl\{-v(x)-r\bigl(h_K(\theta)-\theta\cdot x\bigr)\bigr\}.
\]
For each fixed $x$, the displayed function of $r$ is affine with nonpositive slope.  
Its supremum is therefore convex and nonincreasing.
Moreover, it is nonnegative by $\delta_v\ge0$ and, being finite and convex, is locally absolutely continuous.  
Since $h_K$ is positively homogeneous, for almost every $r>0$ and almost every $\theta$,
\[
  v^*(r\theta)-r\theta\cdot\nabla v^*(r\theta)
  =d_\theta(r)-r d_\theta'(r)\ge d_\theta(r).
\]
Hence the finiteness of the middle integral in \eqref{eq:energy} implies $\delta_v\in L^1(\R^n)$.

For every $R>0$, radial integration by parts gives
\begin{align}\label{eq:parts}
  \int_{B_R}\bigl(v^*-y\cdot\nabla v^*\bigr)\,dy
  &=\int_{\mathbb S^{n-1}}\int_0^R
    \bigl(d_\theta-rd_\theta'\bigr)r^{n-1}\,dr\,d\theta \notag\\
  &=(n+1)\int_{B_R}\delta_v\,dy
    -R^n\int_{\mathbb S^{n-1}}d_\theta(R)\,d\theta.
\end{align}
If $\delta_v\in L^1(\R^n)$, monotonicity of $d_\theta$ gives
\[
  \int_{B_R\setminus B_{R/2}}\delta_v\,dy
  \ge \frac{1-2^{-n}}{n}R^n
       \int_{\mathbb S^{n-1}}d_\theta(R)\,d\theta.
\]
The left-hand side tends to zero as $R\to\infty$, so the boundary term in \eqref{eq:parts} tends to zero.  
Letting $R\to\infty$ proves the last equality in \eqref{eq:energy}.  Conversely, finiteness of the last integral means $\delta_v\in L^1(\R^n)$, and the same argument proves finiteness and equality
of the middle integral.  
Together with the first equality, this proves the lemma.
\end{proof}

We next derive a supporting inequality for the dual Monge--Amp\`ere energy.

\begin{proposition}\label{prop:support}
Assume that $v_0,v_1\in C(\olOmega)$ are convex, vanish on $\partial\Omega$, satisfy $\mathcal E(v_0),\mathcal E(v_1)<\infty$, and  $\MA[v_0](\Omega)<\infty$.  
Then
\begin{equation}\label{eq:support}
  \mathcal E(v_1)-\mathcal E(v_0)
  \ge (n+1)\int_\Omega(v_0-v_1)\,d\MA[v_0].
\end{equation}
\end{proposition}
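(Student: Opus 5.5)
We work entirely on the Legendre side, using the last representation in Lemma~\ref{lem:energy}:
\[
\mathcal E(v_i)=(n+1)\int_{\R^n}\bigl(v_i^*-h_K\bigr)\,dy,\qquad i=0,1 .
\]
Both integrals are finite by assumption, so
\[
\mathcal E(v_1)-\mathcal E(v_0)=(n+1)\int_{\R^n}\bigl(v_1^*-v_0^*\bigr)\,dy .
\]
The boundary function $h_K$ cancels, and the difference $v_1^*-v_0^*$ is integrable.

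The key pointwise inequality is a supporting bound for $v_1^*$ at points where $v_0^*$ is differentiable. Fix such a $y$ and set $x=\nabla v_0^*(y)\in K$. As in the proof of Lemma~\ref{lem:push}, Fenchel equality gives $v_0^*(y)=x\cdot y-v_0(x)$. The definition \eqref{eq:le-v} of $v_1^*$ gives $v_1^*(y)\ge x\cdot y-v_1(x)$. Subtracting, we get
\[
v_1^*(y)-v_0^*(y)\ \ge\ v_0(x)-v_1(x)=(v_0-v_1)\bigl(\nabla v_0^*(y)\bigr)\qquad\text{for a.e. }y .
\]
Integrating over $\R^n$ should then give
\[
\int_{\R^n}(v_1^*-v_0^*)\,dy\ \ge\ \int_{\R^n}G\bigl(\nabla v_0^*(y)\bigr)\,dy,\qquad G:=v_0-v_1 .
\]
We then convert the right-hand side with Lemma~\ref{lem:push}. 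Note that $G$ vanishes on $\partial\Omega$, since both functions vanish there. This explains why $\nabla v_0^*(y)\in\partial\Omega$ contributes nothing, which is the case for all large $|y|$.

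The main obstacle is that $G$ is not nonnegative, while Lemma~\ref{lem:push} is stated only for nonnegative $F$. I will split $G=G^+-G^-$. Both parts are continuous, bounded, nonnegative, and vanish on $\partial\Omega$. Lemma~\ref{lem:push} applies to each part separately, giving
\[
\int_{\R^n}G^\pm\bigl(\nabla v_0^*\bigr)\,dy=\int_\Omega G^\pm\,d\MA[v_0]\ \le\ \|G\|_\infty\,\MA[v_0](\Omega)<\infty .
\]
This is exactly where the hypothesis $\MA[v_0](\Omega)<\infty$ is used. With both integrals finite, $G\circ\nabla v_0^*$ is integrable, the subtraction is legitimate, and
\[
\int_{\R^n}G(\nabla v_0^*)\,dy=\int_\Omega (v_0-v_1)\,d\MA[v_0] .
\]

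Finally, we check that the integration of the pointwise inequality is justified. The left side $v_1^*-v_0^*$ is integrable, being the difference of the $L^1$ functions $v_1^*-h_K$ and $v_0^*-h_K$. The right side is integrable by the previous step. Measurability of $y\mapsto G(\nabla v_0^*(y))$ follows because $\nabla v_0^*$ is a Borel map on its differentiability set. Multiplying by $(n+1)$ yields \eqref{eq:support}.
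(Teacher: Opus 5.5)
Your proof is correct and follows the paper's argument essentially step for step: the pointwise inequality $v_1^*-v_0^*\ge (v_0-v_1)(\nabla v_0^*)$ at differentiability points of $v_0^*$, integrability of the left side through $\delta_{v_1}-\delta_{v_0}\in L^1(\R^n)$, and integrability of the right side by applying Lemma~\ref{lem:push} separately to $(v_0-v_1)_\pm$ together with $\MA[v_0](\Omega)<\infty$. One minor point: your aside that $\nabla v_0^*(y)\in\partial\Omega$ for all large $|y|$ holds only when $\partial v_0(\Omega)$ is bounded, which is not assumed, but the aside plays no role because Lemma~\ref{lem:push} already handles integrands that vanish on $\partial\Omega$.
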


\begin{proof}
At almost every $y$, set $x=\nabla v_0^*(y)$.  The definition of $v_1^*$ and Fenchel equality for $v_0^*$ give
\[
  v_1^*(y)\ge x\cdot y-v_1(x),
  \qquad v_0^*(y)=x\cdot y-v_0(x).
\]
Therefore
\begin{equation}\label{eq:point}
  v_1^*(y)-v_0^*(y)
  \ge v_0\bigl(\nabla v_0^*(y)\bigr)
     -v_1\bigl(\nabla v_0^*(y)\bigr).
\end{equation}
By Lemma~\ref{lem:energy}, both $\delta_{v_0}$ and $\delta_{v_1}$ belong to $L^1(\R^n)$, and
\[
  v_1^*-v_0^*=\delta_{v_1}-\delta_{v_0}\in L^1(\R^n).
\]
The right-hand side of \eqref{eq:point} is integrable by applying Lemma~\ref{lem:push} separately to $(v_0-v_1)_+$ and $(v_0-v_1)_-$, and using $\MA[v_0](\Omega)<\infty$.  
Therefore, integrating \eqref{eq:point} and invoking Lemmas~\ref{lem:push} and \ref{lem:energy} yields \eqref{eq:support}.
\end{proof}

\subsection{The Monge--Amp\`ere energy along the Legendre path}

Let $v_0,v_1\in C(\olOmega)$ be nonzero convex functions that vanish on $\partial\Omega$, and write $\phi_i=v_i^*$ for $i=0,1$.  
For $0\le t\le1$, define
\begin{equation}\label{eq:path}
  \phi_t:=(1-t)\phi_0+t\phi_1,
  \qquad v_t:=\phi_t^*.
\end{equation}
The following facts are Alexandrov-level analogues of properties used in
\cite{Salani2005}.

\begin{lemma}\label{lem:path}
The function $v_t$ is continuous and convex on $\olOmega$, negative in
$\Omega$, and zero on $\partial\Omega$.  More explicitly,
\begin{equation}\label{eq:sup}
  v_t(x)=
  \inf_{\substack{x=(1-t)x_0+t x_1\\x_0,x_1\in\olOmega}}
  \Bigl((1-t)v_0(x_0)+t v_1(x_1)\Bigr).
\end{equation}
Furthermore:
\begin{itemize}
  \item[(i)] whenever $\mathcal E(v_0),\mathcal E(v_1)<\infty$,
  \begin{equation}\label{eq:affine}
    \mathcal E(v_t)=(1-t)\mathcal E(v_0)+t\mathcal E(v_1);
  \end{equation}
  \item[(ii)] the path is uniformly Lipschitz in $t$:
  \begin{equation}\label{eq:lip}
    \|v_t-v_s\|_{L^\infty(K)}
    \le |t-s|\,\|v_1-v_0\|_{L^\infty(K)},
    \qquad s,t\in[0,1];
  \end{equation}
  \item[(iii)] if $t=(1-\tau)t_0+\tau t_1$, then $v_t$ is the weighted
  infimal convolution of $v_{t_0}$ and $v_{t_1}$ with weights $1-\tau$ and $\tau$, as in \eqref{eq:sup}.
\end{itemize}
\end{lemma}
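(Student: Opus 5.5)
The plan is to first establish the infimal convolution formula \eqref{eq:sup} and derive the qualitative properties and (ii), (iii) from it, and then obtain (i) directly from the dual representation in Lemma~\ref{lem:energy}.

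\emph{Infimal convolution.} Let $w$ denote the right-hand side of \eqref{eq:sup}, defined on $K$. Since $\phi_0,\phi_1$ are finite and convex on $\R^n$, so is $\phi_t$. Fenchel--Moreau applied to the lower semicontinuous convex extensions of $v_0,v_1$ (set to $+\infty$ off $K$) gives $\phi_i^*=v_i$ on $K$ and $+\infty$ outside. The standard convex-analysis identity (Rockafellar, Theorem~16.4) states that the conjugate of $(1-t)\phi_0+t\phi_1$ is the closure of the weighted infimal convolution of $v_0$ and $v_1$. By compactness of $K$ and continuity of $v_i$, the infimum in \eqref{eq:sup} is attained. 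The set of admissible pairs $(x_0,x_1)$ depends upper semicontinuously on $x$, so $w$ is lower semicontinuous and no closure is needed; hence $v_t=w$.

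\emph{Qualitative properties and (ii), (iii).} Convexity of $w$ is standard. Since $v_i\le 0$, we get $w\le0$. On the boundary, if $x\in\partial K$ and $x=(1-t)x_0+tx_1$ with $0<t<1$, then $x_0,x_1$ lie on a common supporting face, and in particular on $\partial K$; thus $w(x)=0$. For $x\in\Omega$, taking $x_0=x_1=x$ gives $w(x)\le (1-t)v_0(x)+tv_1(x)<0$, using that nonzero convex functions vanishing on $\partial\Omega$ are negative inside.

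Continuity up to the boundary is the delicate point. Upper semicontinuity follows because $w$ is convex and finite on the compact convex set $K$ with boundary values $0\ge w$; convex functions on polytopes are automatically continuous, but in general one needs more. I would instead argue via (ii). Estimate (ii) follows by writing $v_t(x)\le (1-t)v_0(x_0)+tv_1(x_1)$ and comparing with the optimal pair for $v_s$, or more simply from the dual side: $\|\phi_t-\phi_s\|_\infty\le|t-s|\,\|\phi_1-\phi_0\|_\infty$, and $\|\phi_1-\phi_0\|_\infty\le\|v_1-v_0\|_{L^\infty(K)}$ because conjugation is $1$-Lipschitz in sup norm. Conjugating back preserves the bound on $K$. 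For continuity at a boundary point, I bound $w\ge (1-t)\min v_0$-type quantities locally: if $x_k\to x\in\partial K$, the optimal pairs must accumulate at pairs on the face containing $x$, where $v_0,v_1$ vanish; lower semicontinuity then gives $\liminf w(x_k)\ge0=w(x)$. Part (iii) follows by linearity: $\phi_t=(1-\tau)\phi_{t_0}+\tau\phi_{t_1}$, so applying the first part to $v_{t_0},v_{t_1}$, which are now known to be admissible, gives the claim.

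\emph{Affinity (i).} By Lemma~\ref{lem:energy},
\[
\mathcal E(v_t)=(n+1)\int_{\R^n}(\phi_t-h_K)\,dy=(1-t)(n+1)\int(\phi_0-h_K)\,dy+t(n+1)\int(\phi_1-h_K)\,dy .
\]
Here I use $v_t^*=\phi_t$, which holds because $\phi_t$ is finite and convex, hence closed. The integrands are nonnegative and integrable, so linearity applies.

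The main obstacle I expect is continuity of $v_t$ on $\overline\Omega$, in particular at boundary points of non-strictly convex domains. The face argument above is what I would try first.
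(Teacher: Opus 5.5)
Your proposal is correct and follows essentially the same route as the paper: Rockafellar's Theorem~16.4 with the scaling rule gives \eqref{eq:sup}; the exposed-face argument gives $v_t=0$ on $\partial\Omega$; lower semicontinuity together with $v_t\le0$ gives continuity at the boundary; sup-norm nonexpansiveness of conjugation gives (ii); and $v_t^*=\phi_t$, combined with Lemma~\ref{lem:energy} and the linearity of $t\mapsto\phi_t$, gives (i) and (iii). The differences are cosmetic: you explicitly justify dropping the closure via compactness of the optimal pairs, which the paper leaves to the citation, and your aside about deriving continuity from (ii) is unnecessary (the uniform estimate alone would not give it), since your face/lower-semicontinuity argument already does the job.
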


\begin{proof}
For $0<t<1$, Rockafellar's conjugacy theorem
\cite[Theorem~16.4]{Rockafellar1970}, applied with the usual scaling rule for convex conjugates, gives \eqref{eq:sup}.  Since $\olOmega$ is compact, the infimum is attained.  
The formula also shows that the work domain of $v_t$ is $\olOmega$.  Taking $x_0=x_1=x$ gives $v_t\le(1-t)v_0+tv_1$, and hence $v_t<0$ in $\Omega$ and $v_t\le0$ on $\olOmega=:K$.

Let $x\in\partial\Omega$ and choose a supporting linear functional $\ell$ such that $\ell(x)=\max_K\ell$.  
If $0<t<1$ and $x=(1-t)x_0+tx_1$ with $x_0,x_1\in K$, then
\[
  \max_K\ell=\ell(x)=(1-t)\ell(x_0)+t\ell(x_1)
\]
forces $x_0,x_1$ to lie in the exposed face $\{\ell=\max_K\ell\}\subset\partial\Omega$.  
Since $v_0=v_1=0$ on the boundary, \eqref{eq:sup} yields $v_t(x)=0$.  The function $v_t$ is finite and convex in $\Omega$, so it is continuous there.  
If $x_k\in\Omega$ tends to $x\in\partial\Omega$, lower semicontinuity gives $0=v_t(x)\le\liminf_k v_t(x_k)$, while $v_t\le0$; hence $v_t(x_k)\to0$.
Thus $v_t\in C(K)$.

Because $\phi_t$ is finite, closed, and convex, $v_t^*=\phi_t$.  Therefore
\[
  \phi_t-h_K=(1-t)(\phi_0-h_K)+t(\phi_1-h_K),
\]
and \eqref{eq:affine} follows from Lemma~\ref{lem:energy}.

Set $M=\|v_1-v_0\|_{L^\infty(K)}$.  
Directly from the definition of the conjugate, $|\phi_1-\phi_0|\le M$ on $\R^n$, and hence $|\phi_t-\phi_s|\le|t-s|M$.  
Conjugation reverses order and preserves additive constants, so the same bound holds for $v_t-v_s$ on $K$.  
This proves \eqref{eq:lip}.

Finally,
\[
  \phi_{(1-\tau)t_0+\tau t_1}
  =(1-\tau)\phi_{t_0}+\tau\phi_{t_1}.
\]
Applying the same infimal-convolution identity, and using
$v_{t_i}^*=\phi_{t_i}$, proves the semigroup property (iii).
\end{proof}

\section{Power concavity and rigidity}\label{S3}

In this section, we prove the concavity and rigidity of a functional $P(t)$ on $[0,1]$ that is central to the main theorems.

\begin{proposition}\label{prop:conc}
Let $v_t$ be the path in \eqref{eq:path}.  
For $q>1$ and an integer $m\ge q$,
set
\[
  S(t):=\int_\Omega |v_t|^q\,dx,
  \qquad P(t):=S(t)^{1/(n+m)}.
\]
Then $P$ is concave on $[0,1]$.  
If $P$ is constant, then $v_0=v_1$.
\end{proposition}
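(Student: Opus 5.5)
Our plan is to realize $S(t)$ as the volume of a convex body in a higher-dimensional space and then apply the Brunn--Minkowski inequality in dimension $n+m$. Write $q$-th powers through an $m$-dimensional Euclidean ball. Since $m\ge q$, the function $s\mapsto s^{q/m}$ is concave on $[0,\infty)$. So consider
\[
  C_t:=\bigl\{(x,z)\in\R^n\times\R^m:\ x\in\olOmega,\ |z|\le g_t(x)\bigr\},
  \qquad g_t:=|v_t|^{q/m}.
\]
Then $|C_t|=\omega_m\int_\Omega |v_t|^q\,dx=\omega_m S(t)$. The task is therefore to show $C_t$ is convex and that $C_{(1-\tau)t_0+\tau t_1}\supseteq(1-\tau)C_{t_0}+\tau C_{t_1}$.

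\textbf{Convexity and the inclusion.} Convexity of $C_t$ holds because $-v_t\ge0$ is concave and $s\mapsto s^{q/m}$ is concave and nondecreasing, so $g_t$ is concave and nonnegative on $\olOmega$. For the inclusion, use Lemma~\ref{lem:path}(iii). If $x=(1-\tau)x_0+\tau x_1$ with $x_i\in\olOmega$, then $v_t(x)\le(1-\tau)v_{t_0}(x_0)+\tau v_{t_1}(x_1)$, i.e.
\[
|v_t(x)|\ge(1-\tau)|v_{t_0}(x_0)|+\tau|v_{t_1}(x_1)|.
\]
Applying the concave increasing map $s\mapsto s^{q/m}$ gives $g_t(x)\ge(1-\tau)g_{t_0}(x_0)+\tau g_{t_1}(x_1)$. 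Hence if $|z_i|\le g_{t_i}(x_i)$, the point $(1-\tau)z_0+\tau z_1$ has norm at most $g_t(x)$. Brunn--Minkowski in $\R^{n+m}$ then yields that $|C_t|^{1/(n+m)}$, and hence $P$, is concave in $t$. The bodies are nondegenerate since $v_t<0$ in $\Omega$.

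\textbf{Rigidity (main obstacle).} If $P$ is constant, then $P(1/2)=\tfrac12(P(0)+P(1))$, so equality holds in the chain
\[
|C_{1/2}|^{1/(n+m)}\ge\bigl|\tfrac12C_0+\tfrac12C_1\bigr|^{1/(n+m)}\ge\tfrac12|C_0|^{1/(n+m)}+\tfrac12|C_1|^{1/(n+m)}.
\]
The Brunn--Minkowski equality case for convex bodies gives that $C_1$ is a translate of a dilate of $C_0$. Equal volumes force the dilation factor to be $1$, so $C_1=C_0+(a,b)$. Both bodies are symmetric under $z\mapsto -z$, which forces $b=0$. Both have $x$-projection equal to $\olOmega$, which forces $a=0$ (a bounded convex set is not invariant under a nonzero translation). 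Thus $C_0=C_1$, so $g_0=g_1$, i.e.\ $|v_0|=|v_1|$, and hence $v_0=v_1$ since both are nonpositive.

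The delicate points are concentrated in this step. First, one must check that equality in the second inequality, together with compactness and convexity of the bodies ($g_t$ is continuous on $\olOmega$), legitimately invokes the classical equality case. Second, one must check that no measure-zero discrepancies arise; this is harmless because all sets involved are compact convex bodies with nonempty interior. We expect to need only the single midpoint equality, so the full constancy of $P$ is more than enough.
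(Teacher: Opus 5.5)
Your proposal is correct and follows essentially the same route as the paper. You lift $|v_t|^q$ to the convex body $\{(x,z)\in\olOmega\times\R^m:|z|\le|v_t(x)|^{q/m}\}$, get the Minkowski inclusion from the infimal-convolution semigroup property in Lemma~\ref{lem:path}(iii) and concavity of $s\mapsto s^{q/m}$, apply Brunn--Minkowski in $\R^{n+m}$, and obtain rigidity from its equality case together with equal volumes, the common projection $\olOmega$, and the symmetry of the fibers; the only cosmetic differences are that you use the single midpoint $t=1/2$ and the reflection $z\mapsto-z$, where the paper uses an arbitrary $t\in(0,1)$ and the fact that the fibers are centered balls.
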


For proving Proposition \ref{prop:conc}, we use the classical Brunn--Minkowski inequality with its equality case; see, for example, \cite[Sections~2 and~5]{Gardner2002}.

\begin{lemma}[Brunn--Minkowski]\label{lem:BM}
Let $A,B$ be compact convex bodies with nonempty interior in $\R^d$.  
For $0<t<1$,
\begin{equation}\label{eq:BM}
  |(1-t)A+tB|^{1/d}
  \ge(1-t)|A|^{1/d}+t|B|^{1/d}.
\end{equation}
Equality at one $t\in(0,1)$ holds if and only if $B=cA+b$ for some $c>0$ and $b\in\R^d$.
\end{lemma}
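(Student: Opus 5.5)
The ``if'' direction is elementary. If $B=cA+b$, then $(1-t)A+tB=((1-t)+tc)A+tb$, because $A$ is convex. Hence $|(1-t)A+tB|^{1/d}=((1-t)+tc)|A|^{1/d}=(1-t)|A|^{1/d}+t|B|^{1/d}$. The substantive parts are the inequality \eqref{eq:BM} and the rigidity in the equality case. I would prove both with a mass-transport argument, since it treats inequality and equality uniformly.

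Let $\mu=|A|^{-1}\mathbf 1_A\,dx$ and $\nu=|B|^{-1}\mathbf 1_B\,dx$. By Brenier's theorem there is a convex $\varphi$ on $\R^d$ with $(\nabla\varphi)_\#\mu=\nu$. Since $A$ and $B$ are convex bodies and the densities are constant, Caffarelli's regularity theory makes $\varphi$ a strictly convex Alexandrov solution of $\det D^2\varphi=|B|/|A|$ in $\operatorname{int}A$, smooth in the interior. In particular $\nabla\varphi$ is a diffeomorphism from $\operatorname{int}A$ onto $\operatorname{int}B$.

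Set $T_t(x)=(1-t)x+t\nabla\varphi(x)$. Then $T_t=\nabla\bigl((1-t)|x|^2/2+t\varphi\bigr)$ is the gradient of a strictly convex smooth function, hence injective on $\operatorname{int}A$. It maps $A$ into $(1-t)A+tB$. Its Jacobian is $\det\bigl((1-t)I+tD^2\varphi\bigr)$ with $D^2\varphi\ge0$. The Minkowski determinant inequality gives
\[
\det\bigl((1-t)I+tD^2\varphi\bigr)^{1/d}\ge(1-t)+t\bigl(\det D^2\varphi\bigr)^{1/d}=(1-t)+t\,(|B|/|A|)^{1/d}.
\]
The change of variables formula and injectivity then give
\[
|(1-t)A+tB|\ge\int_A\det DT_t\,dx\ge|A|\Bigl((1-t)+t(|B|/|A|)^{1/d}\Bigr)^d,
\]
which is \eqref{eq:BM} after taking $d$-th roots.

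For the equality case, suppose equality holds at some $t\in(0,1)$. Then equality holds in the Minkowski determinant inequality at every interior point. This forces the eigenvalues of $D^2\varphi(x)$ to be all equal, so $D^2\varphi(x)=c(x)I$ with $c(x)^d=|B|/|A|$. Therefore $D^2\varphi\equiv cI$ on the connected set $\operatorname{int}A$, with $c=(|B|/|A|)^{1/d}$. Integrating twice, $\nabla\varphi(x)=cx+b$, so $\operatorname{int}B=\nabla\varphi(\operatorname{int}A)=c\,\operatorname{int}A+b$. Taking closures gives $B=cA+b$.

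The main obstacle is justifying the regularity used in the change of variables. If one wants to avoid Caffarelli's theory, one can work with McCann's almost-everywhere Hessian (the Alexandrov second derivative). In that setting the inequality still goes through, since the singular part of the distributional Hessian only helps. The equality case, however, then needs an extra step: one must show that the singular part of $D^2\varphi$ vanishes, so that $\varphi$ is genuinely quadratic. I would first try to invoke Caffarelli's interior $C^{2,\alpha}$ regularity for the Brenier map between two convex bodies with uniform densities, which removes this issue. As a fallback, one can use the Knothe map, which is triangular and smooth enough by construction, and for which equality forces each one-dimensional monotone rearrangement to be an affine homothety with a common ratio.
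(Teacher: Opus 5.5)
The paper does not prove this lemma. It treats the Brunn--Minkowski inequality and its equality case for convex bodies as classical and cites Gardner's survey, so your argument is necessarily a different route, and it is a correct one.

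Your transport proof gets both the inequality and the rigidity from a single pointwise fact. Since $T_t=\nabla\bigl((1-t)|x|^2/2+t\varphi\bigr)$ is smooth and injective on $\operatorname{int}A$, the area formula and Minkowski's determinant inequality give \eqref{eq:BM}. Equality then makes the continuous nonnegative function $\det DT_t-\bigl((1-t)+t(|B|/|A|)^{1/d}\bigr)^d$ vanish identically. For the symmetric matrix $D^2\varphi$ this forces $D^2\varphi=cI$, so $\nabla\varphi$ is affine.

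The price is that everything rests on Brenier's theorem plus Caffarelli's interior regularity for the Brenier map between convex bodies with uniform densities. That is much deeper than the statement itself. The classical proofs collected by Gardner are elementary by comparison, and for a paper that only uses the lemma as a tool, the citation is the economical choice.

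Two details deserve a line each:
\begin{itemize}
\item You need $\nabla\varphi(x)\in B$ for every $x\in\operatorname{int}A$, not just almost every $x$, to get $T_t(\operatorname{int}A)\subset(1-t)A+tB$. This follows from continuity of $\nabla\varphi$ and closedness of $B$.
\item You need not claim surjectivity onto $\operatorname{int}B$, which would require Caffarelli's theory for $\varphi^*$. The set $c\operatorname{int}A+b=\nabla\varphi(\operatorname{int}A)$ is an open subset of $B$ of measure $c^d|A|=|B|$. A convex body with nonempty interior has no nonempty relatively open null subsets, so $cA+b=B$.
\end{itemize}

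Your Knothe fallback is incomplete as sketched. Equal diagonal entries of a triangular Jacobian only make each conditional map a homothety of ratio $c$, with a shift depending on the earlier coordinates. A square and a sheared copy of it show that this alone does not give homothety. You would also have to use equality in $|(1-t)A+tB|\ge|T_t(A)|$. Since the Caffarelli route works, you do not need the fallback.
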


Let $q>1$, choose an integer $m\ge q$, and set
\[
  a:=\frac qm\in(0,1].
\]
For a nonzero nonpositive convex function $v$ on $K$ that vanishes on
$\partial K$, define
\begin{equation}\label{eq:body}
  \mathcal K[v]
  :=\{(x,z)\in K\times\R^m:|z|\le |v(x)|^a\}.
\end{equation}
Because $-v$ is concave and $s\mapsto s^a$ is increasing and concave, $|v|^a$ is concave.  
The triangle inequality then shows that $\mathcal K[v]$ is a compact convex body with nonempty interior.  
If $\omega_m$ is the volume of the unit ball in $\R^m$, Fubini's theorem gives
\begin{equation}\label{eq:volume}
  |\mathcal K[v]|=\omega_m\int_\Omega |v|^q\,dx.
\end{equation}

The dimensional-lifting argument below is motivated by the Brunn--Minkowski method for power-concave functions; compare \cite{BrascampLieb1976}.  
We include the proof because its equality case is essential.

\begin{proof}[Proof of Proposition~\ref{prop:conc}]
Fix $0\le t_0<t_1\le1$, $0\le\tau\le1$, and put $t=(1-\tau)t_0+\tau t_1$.  
Take $(x_i,z_i)\in\mathcal K[v_{t_i}]$ for $i=0,1$, and set $x=(1-\tau)x_0+\tau x_1$.  
The semigroup property in Lemma \ref{lem:path} and the concavity of $s^a$ give
\begin{align*}
 |v_t(x)|^a
 &\ge\bigl((1-\tau)|v_{t_0}(x_0)|
            +\tau |v_{t_1}(x_1)|\bigr)^a\\
 &\ge(1-\tau)|v_{t_0}(x_0)|^a
      +\tau |v_{t_1}(x_1)|^a\\
 &\ge |(1-\tau)z_0+\tau z_1|.
\end{align*}
Consequently,
\begin{equation}\label{eq:inclu}
  \mathcal K[v_t]\supset
  (1-\tau)\mathcal K[v_{t_0}]+\tau\mathcal K[v_{t_1}].
\end{equation}
The Brunn--Minkowski inequality \eqref{eq:BM}, together with \eqref{eq:volume}, shows that $P$ is concave.

Suppose now that $P$ is constant.  
Then
\begin{equation}\label{eq:equ-v}
	|\mathcal K[v_t]|=|\mathcal K[v_0]|=|\mathcal K[v_1]|
	\qquad\text{for every }t\in[0,1].
\end{equation}
For any fixed $0<t<1$, inclusion \eqref{eq:inclu} and Brunn--Minkowski give
\begin{align*}
 |\mathcal K[v_t]|
 &\ge |(1-t)\mathcal K[v_0]+t\mathcal K[v_1]|,\\
 |(1-t)\mathcal K[v_0]+t\mathcal K[v_1]|^{1/(n+m)}
 &\ge(1-t)|\mathcal K[v_0]|^{1/(n+m)}
       +t|\mathcal K[v_1]|^{1/(n+m)}.
\end{align*}
The first and last quantities are equal, so equality holds throughout and, in particular, in Brunn--Minkowski inequality.  
Lemma~\ref{lem:BM} yields $\mathcal K[v_1]=c\mathcal K[v_0]+b$ for some $c>0$ and $b\in\R^{n+m}$.  
Then \eqref{eq:equ-v} implies $c=1$.  
The projection of both bodies onto the first $n$ coordinates is $K$.  
Writing $b=(b_x,b_z)$, we obtain $K=K+b_x$, and boundedness forces $b_x=0$.  For every $x\in\Omega$, the fiber of each body over $x$ is a Euclidean ball centered at the origin.  
Then equality of this fiber deduces $b_z=0$.  
Thus $\mathcal K[v_0]=\mathcal K[v_1]$, which gives $|v_0(x)|^a=|v_1(x)|^a$ for all $x\in K$.  
Since both functions are nonpositive, $v_0=v_1$.
\end{proof}

\section{Proof of the main theorems}\label{S4}
We are now in a position to prove the main theorems.

\begin{proof}[Proof of Theorems~\ref{thm:main} and~\ref{thm:eigen}]
Fix either $p>n$ or $p=n$.  
Since $u_0,u_1\in C(\olOmega)$ are nonzero convex solutions of \eqref{eq:mp}, convexity and the zero boundary condition imply $u_i\le0$ on $\olOmega$ and  $u_i<0$ in $\Omega$.  
Moreover,
\[
  \MA[u_i]=(-u_i)^p\,dx,\qquad i=0,1.
\]
Lemma~\ref{lem:energy} gives
\begin{equation}\label{eq:solenergy}
  0<\mathcal E(u_i)=\int_\Omega(-u_i)^{p+1}\,dx<\infty.
\end{equation}
Set
\[
  c_i:=\mathcal E(u_i)^{-1/(n+1)},
  \qquad \widehat u_i:=c_i u_i.
\]
The homogeneity of the Monge--Amp\`ere measure yields
\begin{equation*}
  \mathcal E(\widehat u_i)=1,
  \qquad
  \MA[\widehat u_i]
  =\lambda_i(-\widehat u_i)^p\,dx,
  \qquad
  \lambda_i:=c_i^{n-p}>0.
\end{equation*}

Join $\widehat u_0$ to $\widehat u_1$ by the Legendre path $\widehat u_t$ defined in \eqref{eq:path}.  
By \eqref{eq:affine},
\begin{equation*}
  \mathcal E(\widehat u_t)=1\qquad(0\le t\le1).
\end{equation*}
Proposition~\ref{prop:support}, applied first with base point $\widehat u_0$ and then with base point $\widehat u_1$, gives
\begin{align}
 \int_\Omega-(\widehat u_t-\widehat u_0)(-\widehat u_0)^p\,dx
 &=\lambda_0^{-1}\int_\Omega-(\widehat u_t-\widehat u_0)
     \,d\MA[\widehat u_0]\le0, \label{eq:left} \\
 \int_\Omega-(\widehat u_t-\widehat u_1)(-\widehat u_1)^p\,dx
 &=\lambda_1^{-1}\int_\Omega-(\widehat u_t-\widehat u_1)
     \,d\MA[\widehat u_1]\le0. \label{eq:right}
\end{align}

Put $q=p+1$ and choose an integer $m\ge q$.  
In both cases under consideration, $q\ge2$.  Set
\[
  S(t):=\int_\Omega(-\widehat u_t)^q\,dx,
  \qquad P(t):=S(t)^{1/(n+m)}.
\]
By \eqref{eq:lip}, the nonnegative functions $-\widehat u_t$ take values in a
fixed interval $[0,M]$, where $M$ depends on $\|u_0\|_{L^\infty(K)}$ and $\|u_1\|_{L^\infty(K)}$.  
Since $q\ge2$, Taylor's theorem gives, uniformly for $a,b\in[0,M]$,
\[
  \bigl|b^q-a^q-qa^{q-1}(b-a)\bigr|
  \le C|b-a|^2.
\]
Using \eqref{eq:lip} with $s=0$ and integrating, we obtain, as $t\downarrow0$,
\begin{equation}\label{eq:taylor}
  S(t)-S(0)
  =(p+1)\int_\Omega-
       (\widehat u_t-\widehat u_0)(-\widehat u_0)^p\,dx+O(t^2).
\end{equation}
Together with \eqref{eq:left}, this gives
\begin{equation}\label{eq:d0}
  P'_+(0)\le0.
\end{equation}
Here we used $S(0)>0$ and the fact that $r\mapsto r^{1/(n+m)}$ is increasing
and $C^1$ near $S(0)$.  
The identical expansion at $t=1$, combined with \eqref{eq:right} and division by the negative number $t-1$, gives
\begin{equation}\label{eq:d1}
  P'_-(1)\ge0.
\end{equation}

The one-sided derivatives exist because $P$ is concave by Proposition~\ref{prop:conc}.  
By the concavity and \eqref{eq:d0}-\eqref{eq:d1}, we have
\[
  P'_+(0) = P'_-(1) =0.
\]
Hence $P$ is constant on $[0,1]$ by using the concavity of $P(t)$ again.  
Then, it follows from rigidity assertion in Proposition~\ref{prop:conc} that
\begin{equation}\label{eq:equal}
  \widehat u_0=\widehat u_1.
\end{equation}

Thus $u_1=Cu_0$ for some $C>0$.  Substitution into the Alexandrov equation gives
\[
  C^n(-u_0)^p\,dx
  =\MA[Cu_0]
  =(-Cu_0)^p\,dx
  =C^p(-u_0)^p\,dx.
\]
Since $u_0<0$ in $\Omega$, $C^n=C^p$.  
If $p>n$, then $C=1$, proving Theorem~\ref{thm:main}.  
If $p=n$, \eqref{eq:equal} already says that the two original solutions are positive multiples of one another, proving Theorem~\ref{thm:eigen}.
\end{proof}

\end{document}